\documentclass[11pt,reqno]{amsart}

\usepackage{color}
\usepackage[colorlinks=true, allcolors=blue]{hyperref}
\usepackage{amsmath, amssymb, amsthm}
\usepackage{mathrsfs}
\usepackage{mathtools}
\usepackage[noabbrev,capitalize,nameinlink]{cleveref}
\crefname{equation}{}{}
\usepackage{fullpage}
\usepackage[noadjust]{cite}
\usepackage{graphics}
\usepackage{pifont}
\usepackage{tikz}
\usepackage{bbm}
\usepackage[T1]{fontenc}

\usetikzlibrary{arrows.meta}

\usepackage{environ}
\usepackage{framed}
\usepackage{url}
\usepackage[linesnumbered,ruled,vlined]{algorithm2e}
\usepackage[noend]{algpseudocode}
\usepackage[labelfont=bf]{caption}
\usepackage{cite}
\usepackage{framed}
\usepackage[framemethod=tikz]{mdframed}
\usepackage{appendix}
\usepackage{graphicx}
\usepackage[textsize=tiny]{todonotes}
\usepackage{tcolorbox}
\usepackage{enumerate}
\allowdisplaybreaks[1]
\usepackage{enumerate}

\crefname{algocf}{Algorithm}{Algorithms}

\crefname{equation}{}{} 
\AtBeginEnvironment{appendices}{\crefalias{section}{appendix}} 

\usepackage[color,final]{showkeys} 

\colorlet{refkey}{orange!20}
\colorlet{labelkey}{blue!30}

\crefname{algocf}{Algorithm}{Algorithms}

\numberwithin{equation}{section}
\newtheorem{theorem}{Theorem}[section]
\newtheorem{proposition}[theorem]{Proposition}
\newtheorem{lemma}[theorem]{Lemma}

\crefname{claim}{Claim}{Claims}

\newtheorem{conjecture}[theorem]{Conjecture}
\newtheorem*{question*}{Question}

\theoremstyle{definition}
\newtheorem{definition}[theorem]{Definition}

\newtheorem*{definition*}{Definition}

\theoremstyle{remark}
\newtheorem*{remark}{Remark}


\newcommand{\snorm}[1]{\lVert#1\rVert}

\newcommand{\mb}{\mathbb}

\newcommand{\mc}{\mathcal}

\newcommand{\on}{\operatorname}

\allowdisplaybreaks
\title{Rank deficiency of random matrices}

\author[A1]{Vishesh Jain}
\address{Department of Statistics, Stanford University, Stanford CA 94305, USA}
\email{visheshj@stanford.edu}

\author[A2]{Ashwin Sah}
\author[A3]{Mehtaab Sawhney}
\address{Department of Mathematics, Massachusetts Institute of Technology, Cambridge, MA 02139, USA}
\email{\{asah,msawhney\}@mit.edu}

\begin{document}

\begin{abstract}
Let $M_n$ be a random $n\times n$ matrix with i.i.d.~$\on{Bernoulli}(1/2)$ entries. We show that for fixed $k\ge 1$, 
\[\lim_{n\to \infty}\frac{1}{n}\log_2\mb{P}[\on{corank}M_n\ge k] = -k.\]
\end{abstract}

\maketitle

\section{Introduction}\label{sec:introduction}

A fundamental, and intensely studied, problem in combinatorial random matrix theory is the determination of the probability of singularity of $n\times n$ random Bernoulli matrices (i.e.~ $n\times n$ matrices for which each entry is independently $0$ or $1$ with equal probability). The study of this problem was initiated in work of Koml\'os \cite{Kom67}. After intermediate works over a period of over 50 years \cite{KKS95, TV06, TV07, BVW10}, the breakthrough work of Tikhomirov \cite{Tik20} showed that for any fixed $p \in (0,1/2]$,
\[\mb{P}[M_{n}(\on{Ber}(p)) \text{ is singular}] = (1-p + o_n(1))^{n},\]
where we use the notation $M_{m\times n}(\xi)$ to denote an $m\times n$ random matrix with i.i.d.~entries distributed as $\xi$, and the lighter notation  $M_n(\xi)$ for $M_{n\times n}(\xi)$. Also, $\on{Ber}(p)$ is the random variable which takes on the value $1$ with probability $p$ and $0$ with probability $1-p$.

By considering the probability of a row of the matrix being $0$, one sees that the result of Tikhomirov is optimal up to the $o_n(1)$ term. Recently, several works \cite{Hua20, JSS20discrete1, JSS20discrete2, LT20, BR18} have addressed the more refined question of determining the probability of singularity of $M_{n}(\xi)$ up to a $(1+o_n(1))$ factor; in contrast, the aforementioned result of Tikhomirov determines this probability only up to a subexponential (in $n$) factor. While these works have succeeded in the case of sparse Bernoulli matrices (with sparsity allowed to depend on $n$) \cite{Hua20, LT20, BR18},  as well as in the case of a fixed $\xi$ which is not uniform on its support \cite{JSS20discrete1, JSS20discrete2}, we note that for the case of $\on{Ber}(1/2)$, the estimate of Tikhomirov remains essentially the best known.

An equivalent condition to singularity of $M_{n}(\xi)$ is that the corank of $M_{n}(\xi)$ is at least $1$. Given this formulation, the following question immediately suggests itself: given $k\ge 1$, what is the probability that the corank of $M_{n}(\xi)$ is at least $k$? In the case when $\xi = \on{Ber}(p)$ for fixed $p \in (0,1/2]$, by considering the event that first $k$ rows of the matrix are identically $0$, we see that this probability is at least $(1-p)^{nk}$. The previously best-known upper bound appears to be due to Kahn, Koml\'os, and Szemer\'edi \cite{KKS95}, who showed that there exists a function $f \colon \mb{N} \to \mb{R}^+$ with $f(k) \to 0$ as $k \to \infty$ such that 
\[\mb{P}[\on{corank}M_{n}(\on{Ber}(1/2))\ge k] \le f(k)^{n}.\]
The above simple lower bound shows that the decay of $f(k)$ can be at most $2^{-k}$; it has been suggested (cf.~\cite[Section~4]{Vu20}, `It is tempting to conjecture...') that this rate of decay is essentially sharp, i.e.~that 
\[\mb{P}[\on{corank}M_{n}(\on{Ber}(1/2))\ge k] = (1/2 + o_n(1))^{kn}.\]

The main result of this paper confirms this belief.

\begin{theorem}\label{thm:main}
Fix $p\in (0,1/2]$ and let $\xi = \on{Ber}(p)$. Fix $k\ge 1$ and $\epsilon > 0$. Then, for $n\ge n_{\ref{thm:main}}(p,k,\epsilon)$, we have
\[\mb{P}[\on{corank}M_n(\xi)\ge k]= (1-p+\epsilon)^{kn}.\]
\end{theorem}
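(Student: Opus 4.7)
The lower bound is immediate: the event that $k$ prescribed rows of $M_n(\xi)$ are identically zero occurs with probability $(1-p)^{kn}$ and forces $\on{corank} M_n \geq k$. The plan for the matching upper bound is to induct on $k$, with base case $k = 1$ being Tikhomirov's theorem recalled in the introduction.

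For the inductive step, I would condition on the first $n-1$ rows of $M_n$, forming an $(n-1) \times n$ matrix $A$ with row span $V \subseteq \mb{R}^n$, and let $r_n$ denote the last row. The event $\{\on{corank} M_n \geq k\}$ is contained in
\[
\{\on{rank}(A) \leq n-k-1\} \;\cup\; \bigl(\{\on{rank}(A) = n-k\} \cap \{r_n \in V\}\bigr).
\]
The first event is a strictly stronger statement of the same flavor, which I would handle by strengthening the induction to a simultaneous statement covering rectangular $m \times n$ matrices with $m \approx n$. The main content is the second event, where the key quantity is $\mb{P}[r_n \in V \mid A] = \mb{P}[r_n \perp V^\perp]$ for the $k$-dimensional orthogonal complement $V^\perp$. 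This probability can be as large as $(1-p)^k$ (achieved when $V^\perp$ is close to a coordinate subspace $\on{span}(e_{i_1}, \ldots, e_{i_k})$) but should drop to essentially $(1-p+\epsilon)^n$ for generic $V^\perp$ by a $k$-dimensional inverse Littlewood--Offord / Halász-type bound.

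The proof then hinges on a structured/unstructured dichotomy in the Grassmannian $\on{Gr}(k, n)$. For unstructured $V^\perp$, the per-row small-ball bound $\mb{P}[r_n \perp V^\perp] \leq (1-p+\epsilon)^n$ combines with the rectangular inductive bound $\mb{P}[\on{rank}(A) \leq n-k] \leq (1-p+\epsilon)^{(k-1)n}$ to yield the target $(1-p+\epsilon)^{kn}$. For structured $V^\perp$ near $\on{span}(e_{i_1}, \ldots, e_{i_k})$, the structure forces columns $i_1, \ldots, i_k$ of $A$ to be essentially zero, so the joint event already carries probability $\lesssim (1-p)^{kn}$ by direct enumeration over the $\binom{n}{k}$ possible supports, matching the lower bound up to subexponential factors absorbed into $\epsilon$.

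The main obstacle is extending Tikhomirov's compressible/incompressible decomposition and inverse Littlewood--Offord theorem from individual vectors to $k$-dimensional subspaces. In particular, the structured case requires a careful net argument on $\on{Gr}(k,n)$, controlling the enumeration of structured subspaces (now a positive-dimensional variety rather than a finite collection of structured vectors) so that combinatorial count times per-subspace probability stays within the $(1-p+\epsilon)^{kn}$ budget; the multi-dimensional Halász bound must be strong enough to beat $(1-p)^k$ by a factor $(1-p+\epsilon)^{n-k}$ in the unstructured regime. The parallel rectangular induction is itself a nontrivial complication, requiring Tikhomirov's singularity analysis to be redone in the $(n-1) \times n$ setting.
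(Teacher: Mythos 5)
Your overall frame (lower bound from $k$ zero rows, upper bound via a structured/unstructured dichotomy) points in the right direction, but the route you propose has two genuine gaps, and it misses the observation that makes the actual proof short. First, you do not need a $k$-dimensional inverse Littlewood--Offord theorem in the unstructured regime at all, and no induction on $k$ is needed either. The paper expands the event $\{\on{corank}\ge k\}$ as: some $k$ rows lie in the span of the other $n-k$ rows. Conditioning on the $(n-k)\times n$ matrix formed by those rows, if its ($\ge k$-dimensional) right-kernel contains even a \emph{single} incompressible unit vector $v$ with small threshold, then each of the $k$ remaining independent rows must be orthogonal to $v$, which already costs $(1-p+\epsilon)^{n}$ per row, i.e.\ $(1-p+\epsilon)^{kn}$ total. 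So the one-dimensional structure theorem (a mild modification of Tikhomirov's, with $k$ missing rows instead of one) suffices, and both your parallel rectangular induction and your net argument on $\on{Gr}(k,n)$ are avoided. The "multi-dimensional Hal\'asz bound" you flag as the main obstacle is a red herring.

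Second, the place where the $k$-dimensionality genuinely bites is the one you gloss over: the event that the \emph{entire} kernel of the $(n-k)\times n$ matrix is compressible. Compressible means close to a $\delta n$-sparse vector, not close to a coordinate vector, so your reduction to "$k$ columns of $A$ essentially zero, enumerate $\binom{n}{k}$ supports" does not cover this case, and the required bound $(1-p+\epsilon)^{kn}$ (not merely $c^{kn}$) is exactly the sharp constant you cannot afford to lose. The paper's key lemma here is elementary but not obvious: for a $k\times n$ matrix $M$ with orthonormal rows and $x$ with i.i.d.\ $\on{Ber}(p)$ entries, $\mc{L}(Mx,\theta)\le(1-p)^{k}$ for some $\theta=\theta(p,k)>0$, proved via a case split using Kolmogorov--L\'evy--Rogozin on one row, or else restricted invertibility to extract a well-invertible $k\times k$ minor so that $Mx$ separates the atoms of $\{0,1\}^k$. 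Tensorizing this over the $n-k$ rows and running an $\epsilon$-net over $k$-tuples of orthonormal compressible vectors gives the compressible bound $(1-p+\epsilon)^{kn}$. Without an argument of this type, your structured case does not close, and your dichotomy on the Grassmannian (every vector structured $\Rightarrow$ near a coordinate subspace) is itself an unproven and, as stated, false structural claim.
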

\begin{remark}
A modification of our proof, with \cref{prop:structure} replaced by the corresponding versions in \cite{JSS20discrete1,JSS20discrete2}, shows that for any fixed $\xi$ which is supported on finitely many points, 
\[\mb{P}[\on{corank}M_n(\xi)\ge k]\le(\max_{z\in\mb{R}}\mb{P}[\xi = z]+o_n(1))^{kn}.\]
\end{remark}
We conjecture that, in general, the following holds.
\begin{conjecture}\label{conj:general}
Fix a random variable $\xi$ supported on finitely many points. Fix $k\ge 1$ and $\epsilon > 0$. Then for $n\ge n_{\ref{conj:general}}(\xi,k,\epsilon)$, we have
\[\mb{P}[\on{corank}M_n(\xi)\ge k]\le(\mb{P}[\xi = 0]+\epsilon)^{kn}+(\mb{P}[\xi_1=\cdots=\xi_{k+1}]+\epsilon)^n,\]
where $\xi_1,\ldots,\xi_{k+1}$ are independent samples of $\xi$.
\end{conjecture}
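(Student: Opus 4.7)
The plan is to split the event $\{\on{corank}(M_n(\xi)) \geq k\}$ according to whether any $k+1$ rows of the matrix are identical, with the two contributions absorbed into the two summands of the conjectured bound; denote them $A := (\mb{P}[\xi=0]+\epsilon)^{kn}$ and $B := (\mb{P}[\xi_1=\cdots=\xi_{k+1}]+\epsilon)^n$.

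For the ``many identical rows'' event $\mc{E} := \{\text{some } k+1 \text{ rows of } M_n(\xi) \text{ are pairwise identical}\}$, a union bound immediately yields
\[
\mb{P}[\mc{E}] \leq \binom{n}{k+1}\,\mb{P}[\xi_1=\cdots=\xi_{k+1}]^n \leq (\mb{P}[\xi_1=\cdots=\xi_{k+1}]+\epsilon)^n = B
\]
for $n$ large, since $\binom{n}{k+1}$ is polynomial in $n$ and thus absorbable into the $\epsilon$ slack. This captures the $B$ summand.

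For the complement, it remains to show $\mb{P}[\on{corank}(M_n(\xi)) \geq k,\ \mc{E}^c] \leq A$. On $\mc{E}^c$, every part of the row-equivalence partition $\mc{P}$ has size at most $k$. The approach is to condition on $\mc{P}$: writing the part sizes as $a_1,\ldots,a_\ell \leq k$, one has $\on{corank}(M_n(\xi)) = (n-\ell) + \on{corank}(\wt M_{\mc{P}})$, where $\wt M_{\mc{P}}$ is the reduced $\ell\times n$ matrix with pairwise distinct rows and whose conditional law is a controlled perturbation of $\xi^{\otimes n}$. Summing over $\mc{P}$, one bounds the partition probability by $\prod_i q_{a_i}^n$ with $q_a := \sum_z \mb{P}[\xi = z]^a$, and then applies an analog of \cref{thm:main} to $\wt M_{\mc{P}}$ to control the conditional probability that $\on{corank}(\wt M_{\mc{P}}) \geq k-(n-\ell)$.

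The main obstacle is to show that this partition sum fits inside $A$. When $\mb{P}[\xi=0]$ is strictly smaller than $\max_z \mb{P}[\xi=z]$ (e.g.\ $\xi$ Bernoulli$(0.7)$, where $q_2 = 0.58 \gg \mb{P}[\xi=0] = 0.3$), the naive per-part estimate $q_a^n$ can already exceed $\mb{P}[\xi=0]^{(a-1)n}$, so the partition sum cannot be bounded termwise. The resolution must be a strengthening of \cref{prop:structure} that, on $\mc{E}^c$, extracts a kernel vector whose anti-concentration value is $\mb{P}[\xi = 0]$ rather than $\max_z \mb{P}[\xi = z]$: intuitively, the ``no $k+1$ identical rows'' constraint forbids a positive fraction of rows from being nearly constant with a common value, so the near-constant kernel-vector mechanism that saturates the Tikhomirov-type bound for general $\xi$ is excluded, leaving the zero-coordinate mechanism of rate $\mb{P}[\xi=0]$ as the dominant contribution. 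Establishing such a refined structural theorem, and correctly combining it with the partition decomposition, is where the bulk of the work would lie.
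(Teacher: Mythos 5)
This statement is \cref{conj:general}, which the paper explicitly leaves open: the authors prove only the Bernoulli case (\cref{thm:main}) and state the general finitely-supported case as a conjecture for future work. So there is no proof in the paper to compare against, and your proposal should be judged as a standalone attempt. As such, it is not a proof: by your own admission, the central ingredient --- a refined structure theorem extracting a kernel vector with anticoncentration rate $\mb{P}[\xi=0]$ rather than $\max_z\mb{P}[\xi=z]$ on the event $\mc{E}^c$ --- is left entirely unestablished, and that is precisely where the difficulty of the conjecture lives.

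Beyond the acknowledged gap, the decomposition itself is flawed: the intermediate claim $\mb{P}[\on{corank}M_n(\xi)\ge k\wedge\mc{E}^c]\le A$ is false. Two concrete failures. First, mixed row events: take $k=2$ and $\xi$ with $p_0:=\mb{P}[\xi=0]=0.01$ and $\mb{P}[\xi=1]=0.99$. The event that two rows are equal and a third row is zero forces corank $\ge 2$, lies in $\mc{E}^c$ up to an exponentially small correction (three identical rows are far less likely), and has probability of order $(q_2p_0)^n$ with $q_2=\mb{P}[\xi_1=\xi_2]=0.9802$, i.e.\ $(0.0098)^n\gg(p_0+\epsilon)^{2n}=A$. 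This event is consistent with the conjecture only because it is absorbed by $B$ (indeed $q_2p_0\le q_3$ by log-convexity of $j\mapsto\mb{P}[\xi_1=\cdots=\xi_j]$), but your split assigns it to the $A$ side. Second, column coincidences: $k+1$ identical \emph{columns} also force corank $\ge k$, occur with probability of order $B$, and are compatible with $\mc{E}^c$; when $\mb{P}[\xi=0]$ is small this again dwarfs $A$. Any correct decomposition must route the general ``$k-t+1$ coincident rows (or columns) plus $t$ zero rows (or columns)'' events to the $B$ term via the log-convexity interpolation above, and the structural theorem you would need must rule out \emph{both} row-type and column-type near-degeneracies before the $\mb{P}[\xi=0]$ rate can emerge. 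The paper's own machinery (\cref{prop:compressible,prop:structure}) yields only the rate $\max_z\mb{P}[\xi=z]$ in place of $\mb{P}[\xi=0]$, which is why the authors can prove the $\on{Ber}(p)$, $p\le 1/2$ case (where the two coincide) but not the general conjecture.
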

A stronger conjecture is that the dominant contribution to the probability of the corank being at least $k$ comes from the event of having $k-t+1$ rows equal up to sign and $t$ rows zero for some $0\le t\le k$, or the same for columns (i.e., this controls the probability up to a $(1+o_n(1))$ factor). All such events contribute, for instance, when $\xi = \on{Ber}(1/2)$. However, in the case when $\xi = \on{Ber}(p)$, $p \in (0,1/2)$, the event of having $k$ rows which are identically zero is exponentially more likely than any of the other events. In concurrent and independent work, Huang \cite{Hua21} has proved this stronger conjecture (with accompanying singular value bounds) for sufficiently sparse Bernoulli matrices i.e.~$\xi = \on{Ber}(p_n)$ with $$1 \le \liminf_{n \to \infty}\frac{p_n \cdot n}{\log{n}} \le \limsup_{n \to \infty}\frac{p_n \cdot n}{\log{n}} < \infty;$$ 
it is plausible that, combined with the techniques in \cite{LT20}, the upper bound on the $\limsup$ can be relaxed (perhaps even up to $p_n \le c$ for some small constant $c$). In the complementary dense case considered here, we leave the resolution of this stronger conjecture as a subject for future research.

Finally, we mention that in recent years, there have been several other works on the (co)rank of random matrices (cf.~\cite{CV08, CEGHR20}). However, the focus of these works is on the asymptotic determination of the (co)rank of various models, which is different from our focus on the determination of (sharp) non-asymptotic rates to have corank at least $k$ for matrices which are of full rank with high probability.   


\subsection{Notation}\label{sub:notation}
Given a positive integer $N\ge 1$, let $\mb{S}^{N-1}$ be the set of unit vectors in $\mb{R}^{N}$. Let $\snorm{\cdot}_2$ be the Euclidean norm. For a matrix $A = (A_{ij})$, let $\snorm{A}$ be its spectral norm (i.e., $\ell^2\to\ell^2$ operator norm) and let $\snorm{A}_{\on{HS}}$ be its Hilbert-Schmidt norm, defined as
\[\snorm{A}_{\on{HS}}^2 = \sum A_{ij}^2.\]
We let $[N]$ denote the discrete interval $\{1,\dots,N\}$. Given an $m\times n$ matrix $A$ and a subset $S \subseteq [n]$ of columns, we let $A_S$ be the $m\times |S|$ submatrix of $A$ consisting only of the columns in $S$. 

For an $\mb{R}^{N}$-valued random variable $\xi$ and a real number $r \ge 0$, we define the L\'evy concentration function by 
\[\mc{L}(\xi, r):= \sup_{z \in \mb{R}^N}\mb{P}[\snorm{\xi-z}_2\le r].\]
Note that the case $N = 1$ coincides with the usual (scalar) L\'evy concentration function. We let $\ell_1(\mb{Z})$ denote the set of functions $f\colon\mb{Z}\to\mb{R}$ satisfying $\sum_{z\in\mb{Z}}|f(z)| < \infty$.

We also make use of asymptotic notation. Given functions $f,g$, we write $f = O_{\alpha}(g)$ or $f\lesssim_{\alpha} g$ to mean $f \le C_\alpha g$, where $C_\alpha$ is some constant depending on $\alpha$. We write $f = \Omega_\alpha(g)$ or $f \gtrsim_\alpha g$ to mean $f \ge c_{\alpha} g$, where $c_\alpha > 0$ is some constant depending on $\alpha$. Finally, we write $f = \Theta_\alpha(g)$ to mean that both $f = O_\alpha(g)$ and $f = \Omega_\alpha(g)$ hold. For parameters $\epsilon, \delta$, we write $\epsilon \ll_{\alpha} \delta$ to mean that $\epsilon\le c_{\alpha}(\delta)$ for a sufficiently decaying function $c_\alpha$ depending on $\alpha$. 

Finally, we will omit floors and ceilings when they make no essential difference.

\section{Proof of \texorpdfstring{\cref{thm:main}}{Theorem 1.2}}\label{sec:proof}

\subsection{Preliminaries}
\label{sub:prelims}

We collect some (by now) standard notions in the non-asymptotic theory of random matrices. For parameters $\delta, \rho \in (0,1)$ and an integer $n \ge 1$, $\on{Comp}_{n}(\delta, \rho)$ denotes the set of unit vectors in $\mb{R}^n$ which have Euclidean distance at most $\rho$ to the set of $\delta n$-sparse vectors. $\on{Incomp}_{n}(\delta, \rho) := \mb{S}^{n-1} \setminus \on{Comp}_{n}(\delta, \rho)$. When the ambient dimension is clear from context, we will drop the subscript $n$. 

We will need to consider the anticoncentration behavior of a vector with respect to i.i.d.~$\on{Ber}(p)$ random variables. For this, we will use the threshold function, which was isolated in the work of Tikhomirov \cite{Tik20}. 

\begin{definition}\label{def:threshold}
For $p\in(0,1/2]$, $L \ge 1$, and $x\in\mb{R}^n$, we define
\[\mc{T}_p(x,L) := \sup\bigg\{t\in(0,1): \mc{L}\bigg(\sum_{i=1}^nb_ix_i, t\bigg) > Lt\bigg\},\]
where $b_1,\dots, b_n$ are independent $\on{Ber}(p)$ random variables.
\end{definition}

\subsection{Overview of the proof}
In this subsection, we present the (short) proof of \cref{thm:main}, modulo the key \cref{prop:compressible,prop:structure}, which we will formally state and prove in the subsequent subsections. 

\begin{proof}[Proof of \cref{thm:main}]
For $\delta, \rho \in (0,1)$, let $\mc{E}_{C}(\delta, \rho)$ be the event that for every $(n-k)\times n$ sub-matrix $A$ of $M_{n}(\xi)$, and for all $x \in \mb{S}^{n-1}$ such that $Ax  = 0$, we have that $x \in \on{Comp}(\delta, \rho)$. In words, the right-kernel unit vectors of every $(n-k)\times n$ sub-matrix of $M_{n}(\xi)$ are in $\on{Comp}(\delta, \rho)$.  

Denote the rows of $M_n(\xi)$ by $R_1,\dots, R_n$. If $\on{rank}M_n(\xi)\le n-k$, then there must be some $k$ rows of $M_{n}(\xi)$ which are in the span of the remaining $n-k$ rows. Since the rows are i.i.d, it follows from the union bound that
\begin{align*}
\mb{P}[\on{rank}M_n(\xi)\le n-k]&\le\mb{P}[\mc{E}_C(\delta, \rho)] + \mb{P}[\on{rank} M_n(\xi)\le n-k\wedge\mc{E}_C(\delta, \rho)^c]\\
&\le \mb{P}[\mc{E}_C(\delta, \rho)] + \binom{n}{k}\mb{P}[\{R_1,\ldots,R_k\in\on{span}(R_{k+1},\ldots,R_n)\}\wedge\mc{E}_C(\delta, \rho)^c].
\end{align*}

In \cref{prop:compressible}, which is the key innovation of this work, we will show that there exist $\delta, \rho \in (0,1)$ (depending on $k, p, \epsilon$) such that
\[\mb{P}[\mc{E}_C(\delta, \rho)]\le (1-p+\epsilon)^{kn}.\]
Now, fix this choice of $\delta, \rho$ and denote the corresponding event $\mc{E}_C(\delta, \rho)$ simply by $\mc{E}_C$. Since $\binom{n}{k} \le n^{k} \le (1+\epsilon)^{n}$ for $n$ sufficiently large, it remains to show that
\[\mb{P}[\{R_1,\ldots,R_k\in\on{span}(R_{k+1},\ldots,R_n)\}\wedge\mc{E}_C^c] \le (1-p+\epsilon)^{nk}.\]

In \cref{prop:structure}, we show the following dichotomy: consider the $(n-k)\times n$ matrix $M_{(n-k)\times n}(\xi)$ formed by the rows $R_{k+1},\ldots, R_{n}$. With probability at least $1-4^{-kn}$, either 
\begin{itemize}
\item every unit vector in the right-kernel of $M_{(n-k)\times n}(\xi)$ is in $\on{Comp}(\delta,\rho)$, or 
\item there is a unit vector $v = v (R_{k+1},\ldots, R_n)$ in the right-kernel of $M_{(n-k)\times n}(\xi)$ with
\[\mc{T}_p(v,L_{\ref{prop:structure}})\le (1-p+\epsilon)^n,\]
where $L_{\ref{prop:structure}}$ is a constant depending on $k, p, \epsilon$.
\end{itemize}
Note that, on the event $\mc{E}_C^c$, the first case cannot occur. Let $\mc{A}$ denote the set of possible realizations of $R_{k+1},\ldots,R_n$ for which the second case occurs; for every such realisation $a \in \mc{A}$, we have a unit vector $v = v(a)$ satisfying the conclusion of the second case. Then, 
\begin{align*}
\mb{P}[R_1,\ldots,R_k\in\on{span}(R_{k+1},\ldots,R_n)\wedge\mc{E}_C^c]&\le\sup_{a\in\mc{A}}\mb{P}[\langle R_1, v(a)\rangle = \cdots = \langle R_{k},v(a)\rangle = 0] + 4^{-kn}\\
&\le L_{\ref{prop:structure}}^{k}(1-p+\epsilon)^{kn} + 4^{-kn}.
\end{align*}
The last inequality uses the independence of the rows, the definition of the threshold function, and the property $\mc{T}_p(v(a), L_{\ref{prop:structure}}) \le (1-p+\epsilon)^{n}$. The result now follows upon rescaling $\epsilon$.
\end{proof}

\subsection{Compressible vectors}\label{sub:compressible}
In this subsection, we prove \cref{prop:compressible}. Recall the event $\mc{E}_{C}(\delta, \rho)$ defined at the start of the proof of \cref{thm:main}.
\begin{proposition}\label{prop:compressible}
Fix $p\in(0,1/2]$, $k\ge 1$, and $\epsilon > 0$. There exist $\delta,\rho \in (0,1)$ (depending on $p, k, \epsilon$) such that
\[\mb{P}[\mc{E}_C(\delta,\rho)]\le (1-p+\epsilon)^{kn}.\]
\end{proposition}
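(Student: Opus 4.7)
The plan is to generalize Tikhomirov's $k=1$ compressible-vector argument, exploiting that the right-kernel of an $(n-k)\times n$ matrix has dimension at least $k$. By definition, $\mc{E}_C(\delta,\rho)$ is contained in the event $\mc{F}$ that the particular submatrix $A:=M_{(n-k)\times n}(\xi)$ formed by the first $n-k$ rows of $M_n(\xi)$ satisfies $\mr{Null}(A)\cap\mb{S}^{n-1}\subseteq\on{Comp}(\delta,\rho)$, so it suffices to bound $\mb{P}[\mc{F}]$. On $\mc{F}$, since $\dim\mr{Null}(A)\ge k$, one can extract orthonormal vectors $v_1,\ldots,v_k\in\mr{Null}(A)\cap\on{Comp}(\delta,\rho)$, each within $\rho$ in $\ell^2$ of some $\delta n$-sparse unit vector.

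First I would cover all such frames by a net $\mc{N}$ of orthonormal $k$-tuples $(\tilde v_1,\ldots,\tilde v_k)$ accurate componentwise to $O(\rho)$; this is built by first fixing a common support $S\subseteq[n]$ with $|S|\le k\delta n$ and then taking a scale-$\rho$ net of orthonormal $k$-frames in $\mb{R}^S$, giving
\[|\mc{N}|\le\binom{n}{k\delta n}(C/\rho)^{k^2\delta n}\le\exp(O_k(\delta n\log(1/(\delta\rho)))).\]
Conditioning on the high-probability event $\snorm{A}\le C\sqrt{n}$, if some $(v_1,\ldots,v_k)\in\mr{Null}(A)^k$ is approximated by $\tilde V:=[\tilde v_1|\cdots|\tilde v_k]\in\mc{N}$ then $\snorm{A\tilde V}_{\on{HS}}\le C\rho\sqrt{kn}$, and since the rows $R_1,\ldots,R_{n-k}$ of $A$ are i.i.d.\ with $\snorm{A\tilde V}_{\on{HS}}^2=\sum_j\snorm{\tilde V^TR_j}_2^2$, Markov's inequality forces at least $n-O_k(1)$ of the rows to satisfy $\snorm{\tilde V^TR_j}_2\le O(\rho)$. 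A union bound over the choice of these rows then bounds the per-$\tilde V$ probability by $n^{O_k(1)}\cdot\mc{L}(\tilde V^TR,O(\rho))^{n-O_k(1)}$, where the key small-ball input
\[\mc{L}(\tilde V^TR,O(\rho))\le(1-p+\epsilon_1)^k\]
should hold uniformly over $\tilde V$ in the net---the extremal case $\tilde V=\mr{span}(e_{j_1},\ldots,e_{j_k})$ gives exactly $(1-p)^k$ by independence of the $\on{Ber}(p)$ entries, and the general case follows by perturbation.

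A union bound over $\mc{N}$ then yields
\[\mb{P}[\mc{F}]\le|\mc{N}|\cdot n^{O_k(1)}(1-p+\epsilon_1)^{k(n-O_k(1))}+e^{-\Omega(n)},\]
and choosing $\delta,\rho$ small and $\epsilon_1<\epsilon$ (all depending on $p,k,\epsilon$) absorbs the net and polynomial factors, closing the bound at $(1-p+\epsilon)^{kn}$. The main obstacle I expect is the per-row small-ball estimate above: while the exact coordinate case is immediate by independence, a quantitatively robust version that tolerates the $\sqrt{n}\cdot\rho$ blow-up from the spectral-norm approximation step, and handles frames that are only approximately supported on $k\delta n$ coordinates, requires a careful $k$-tuple generalization of Tikhomirov's single-vector analysis of the threshold function $\mc{T}_p$ from \cref{def:threshold}.
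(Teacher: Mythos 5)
Your overall architecture --- restrict to the first $n-k$ rows, extract an orthonormal $k$-frame of compressible kernel vectors, net the frames, and tensorize a per-row small-ball estimate --- is the same as the paper's. But the step you defer is the entire content of the proposition, and your proposed route for it would fail. You need $\mc{L}(\tilde V^T R,\theta)\le(1-p+\epsilon_1)^k$ at some scale $\theta$ depending only on $p,k$ (not on $n$), \emph{uniformly over all} orthonormal $k$-frames $\tilde V$, and you suggest this "follows by perturbation" from the coordinate-frame case. It does not: a general orthonormal frame supported on $k\delta n$ coordinates (e.g.\ a row equal to $m^{-1/2}\mbm{1}_S$ with $m=|S|$ large) is nowhere near a coordinate frame, and no perturbative argument transfers the atom structure of $\on{Ber}(p)^k$ to it. The paper's \cref{lem:levy-estimate} handles this with a genuine dichotomy: either some row of $\tilde V^T$ retains $\ell^2$ mass at least $4^{-k}$ outside its top $O(25^k/p)$ coordinates, in which case the Kolmogorov--L\'evy--Rogozin inequality (\cref{lem:LKR}) applied to that single coordinate of $\tilde V^TR$ already gives a bound below $2^{-k}\le(1-p)^k$; or all rows are essentially supported on a common set $\mc{T}$ of $O_k(1/p)$ coordinates, in which case a restricted invertibility theorem (\cref{lem:restricted-invertibility}) extracts a $k\times k$ submatrix with least singular value $\gtrsim_{p,k}1$, so that the relevant coordinates of $R$ map $\{0,1\}^k$ injectively with separation $\theta(p,k)$ and the small-ball probability at scale $\theta$ is exactly the largest atom $(1-p)^k$. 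This two-case argument is the key innovation you are missing, and it is also what produces a scale $\theta$ independent of $\rho$, allowing $\rho,\delta$ to be chosen afterwards.

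There is a second, independent error: you condition on $\snorm{A}\le C\sqrt n$, but for $\on{Ber}(p)$ entries the mean of $A$ is the all-$p$ matrix, so $\snorm{A}=\Theta(n)$ deterministically and your event is empty for large $n$. The paper instead controls the operator norm of $A$ restricted to the hyperplane $\{x:\sum_ix_i=0\}$ and uses a net of $\on{Comp}(\delta,\rho)$ densified in the all-ones direction (as in Tikhomirov's Proposition~3.6); your net construction and the approximation step $\snorm{A(\tilde V-V)}_{\on{HS}}\lesssim\rho\sqrt{kn}$ need the same repair. Finally, a quantitative slip: from $\snorm{A\tilde V}_{\on{HS}}^2\lesssim\rho^2kn$, Markov gives that all but $\epsilon' n$ rows (not all but $O_k(1)$ rows) satisfy $\snorm{\tilde V^TR_j}_2\lesssim\rho/\sqrt{\epsilon'}$; the resulting $\binom{n}{\epsilon' n}$ union-bound cost is still absorbable, but the bookkeeping must be done at this weaker level, as in the paper's \cref{lem:fixed-vectors}.
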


The proof of \cref{prop:compressible} requires two ingredients, the first of which is the classical Kolmogorov-L\'evy-Rogozin anticoncentration inequality.
\begin{lemma}[\cite{Rog61}]
\label{lem:LKR}
Let $\xi_1,\dots,\xi_n$ be independent random variables. Then, for any real numbers $r_1,\dots,r_n > 0$ and any real number $r\ge\max_{i\in[n]}r_i$, 
\begin{align*}
    \mc{L}\bigg(\sum_{i=1}^{n}\xi_i, r\bigg) \le \frac{C_{\ref{lem:LKR}}r}{\sqrt{\sum_{i=1}^{n}(1-\mc{L}(\xi_i, r_i))r_i^2}},
\end{align*}
where $C_{\ref{lem:LKR}}>0$ is an absolute constant. 
\end{lemma}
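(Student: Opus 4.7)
The plan is to prove this classical Rogozin-type bound by a standard Fourier-analytic route: smooth the concentration function via an Esseen-type inequality, exploit independence to factor the characteristic function, and then use symmetrization to connect the modulus of each factor back to the individual concentration functions $\mc{L}(\xi_i,r_i)$.

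First I would invoke Esseen's smoothing lemma: there is an absolute constant $C$ and a smooth nonnegative kernel $K$ (essentially a Fej\'er-type kernel) supported in $[-1,1]$ such that for any random variable $Y$ and any $r>0$,
\[
\mc{L}(Y, r) \;\le\; C r \int_{-1/r}^{1/r} |\phi_Y(t)|\, K(rt)\, dt,
\]
where $\phi_Y(t) = \mb{E} e^{itY}$. Applying this to $Y = S := \sum_i \xi_i$ and using independence gives
\[
\mc{L}(S, r) \;\le\; C r \int_{-1/r}^{1/r} \prod_{i=1}^n |\phi_{\xi_i}(t)|\, K(rt)\, dt .
\]

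Next I would bound each factor via symmetrization. Let $\xi_i'$ be an independent copy of $\xi_i$, set $\widetilde{\xi}_i = \xi_i - \xi_i'$, and note that $|\phi_{\xi_i}(t)|^2 = \mb{E}\cos(t\widetilde{\xi}_i)$. A standard symmetrization inequality gives $\mb{P}[|\widetilde{\xi}_i|\ge r_i] \ge 1 - \mc{L}(\xi_i,r_i)$, and from the elementary inequality $1-\cos(ts) \gtrsim \min(1, t^2 s^2)$ one obtains
\[
1 - |\phi_{\xi_i}(t)|^2 \;\gtrsim\; (1 - \mc{L}(\xi_i,r_i))\,\min(1, t^2 r_i^2).
\]
Combining with the elementary bound $\sqrt{y}\le e^{-(1-y)/2}$ (equivalently $\ln y \le y-1$) applied to $y=|\phi_{\xi_i}(t)|^2$,
\[
\prod_{i=1}^n |\phi_{\xi_i}(t)| \;\le\; \exp\bigg(-c \sum_{i=1}^n (1-\mc{L}(\xi_i,r_i))\min(1, t^2 r_i^2)\bigg).
\]

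Finally I would use the hypothesis $r \ge \max_i r_i$. On the support $|t|\le 1/r$ of the kernel $K(rt)$, we have $|t r_i|\le 1$ for every $i$, so $\min(1, t^2 r_i^2)=t^2 r_i^2$, and the product is bounded by $\exp(-c t^2 D)$ with $D := \sum_i (1-\mc{L}(\xi_i,r_i))\, r_i^2$. Plugging into the Esseen estimate,
\[
\mc{L}(S, r) \;\lesssim\; r \int_{-\infty}^{\infty} e^{-c t^2 D}\, dt \;\lesssim\; \frac{r}{\sqrt{D}},
\]
which is the claimed bound after absorbing absolute constants into $C_{\ref{lem:LKR}}$ (and using the trivial bound $\mc{L}(S,r)\le 1$ when $D$ is too small for the Gaussian tail to be informative).

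The main obstacle is calibrating the symmetrization step cleanly: one must ensure that the passage from $1-|\phi_{\xi_i}|^2$ to $(1-\mc{L}(\xi_i,r_i))\min(1,t^2r_i^2)$ goes through with absolute constants independent of $n$ and of the distributions, which is why the assumption $r\ge \max_i r_i$ is crucial (it is exactly what lets us replace $\min(1,t^2r_i^2)$ by $t^2 r_i^2$ on the range dictated by Esseen's kernel). The rest is elementary Fourier calculus.
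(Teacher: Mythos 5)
Your skeleton (Esseen's smoothing inequality, factorization by independence, symmetrization, and $\sqrt{y}\le e^{-(1-y)/2}$) is the right one for Esseen's Fourier-analytic proof of this inequality; note the paper itself gives no proof but cites Rogozin, whose original argument is combinatorial (via the Erd\H{o}s--Littlewood--Offord theorem). However, there is a genuine gap at the central step: the ``elementary inequality'' $1-\cos(ts)\gtrsim\min(1,t^2s^2)$ is false. It holds only for $|ts|\le\pi$ (where $1-\cos u\ge\tfrac{2}{\pi^2}u^2$) and fails completely whenever $ts$ is near a nonzero multiple of $2\pi$. Consequently your displayed bound $1-|\phi_{\xi_i}(t)|^2\gtrsim(1-\mc{L}(\xi_i,r_i))\min(1,t^2r_i^2)$ is false: take $\xi_i$ uniform on $\{0,2\pi\}$, $r_i=r=1$, and $t=1$; then $\phi_{\xi_i}(1)=\tfrac12(1+e^{2\pi i})=1$, so the left-hand side is $0$, while $\mc{L}(\xi_i,1)=1/2$ makes the right-hand side $\gtrsim 1/2$. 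The same example kills the subsequent pointwise bound $\prod_i|\phi_{\xi_i}(t)|\le e^{-ct^2D}$ at $t=\pm1$ (the left side equals $1$, the right side is $e^{-cn/2}$), and in this example the neighborhoods of $t=\pm1$ carry a constant fraction of the Esseen integral, so the bad set cannot simply be discarded. The hypothesis $r\ge\max_ir_i$ guarantees $|tr_i|\le1$ on the integration range, but it says nothing about $|t\widetilde{\xi}_i|$, which is what enters the cosine: on the event $|\widetilde{\xi}_i|\ge r_i$ the symmetrized variable can be arbitrarily large.

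This is not a removable technicality; it is the hard part of the Fourier proof. What is true for every real $x$ is only the averaged inequality $\frac{1}{2T}\int_{-T}^{T}(1-\cos(tx))\,dt=1-\frac{\sin(Tx)}{Tx}\gtrsim\min(1,T^2x^2)$, which controls the average of $1-|\phi_{\xi_i}(t)|^2$ over $t$ rather than its pointwise value, and Jensen's inequality goes the wrong way when you try to push that average through the exponential. Repairing the argument requires either Esseen's treatment of the large-$|\widetilde{\xi}_i|$ contribution --- a sub-level-set estimate for $t\mapsto\sum_i\mb{E}[1-\cos(t\widetilde{\xi}_i)]$ exploiting the quasi-subadditivity $1-\cos(a+b)\le2(1-\cos a)+2(1-\cos b)$, in the spirit of Hal\'asz --- or reverting to Rogozin's combinatorial proof. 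As written, the argument does not establish the lemma.
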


The second ingredient is a version of restricted invertibility which allows one to select a subset of columns of full rank. 

\begin{lemma}[{\cite[Theorem~1]{GO11}}]\label{lem:restricted-invertibility}
Let $U$ be an $n\times m$ matrix of rank $n$. Then, there exists a subset $\mc{S} \subseteq [m]$ of columns of size $|\mc{S}| = n$ such that 
\[\snorm{U_\mc{S}^{-1}}_{\on{HS}}^2\le (m-n+1)\cdot \on{Tr}[(UU^{T})^{-1}].\]
\end{lemma}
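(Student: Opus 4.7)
The plan is to prove \cref{lem:restricted-invertibility} by a weighted-averaging argument over all size-$n$ column subsets $\mc{S} \subseteq [m]$, with weights $\det(U_\mc{S})^2$ (so-called \emph{volume sampling}). Specifically, I will establish
\[\frac{\sum_{|\mc{S}|=n}\snorm{U_\mc{S}^{-1}}_{\on{HS}}^2 \det(U_\mc{S})^2}{\sum_{|\mc{S}|=n} \det(U_\mc{S})^2} = (m-n+1)\cdot\on{Tr}((UU^T)^{-1}),\]
after which the probabilistic method guarantees some $\mc{S}$ with $\det(U_\mc{S})\neq 0$ attaining the claimed bound. The denominator equals $\det(UU^T)$ by Cauchy-Binet and is positive by the rank hypothesis on $U$, so such an $\mc{S}$ exists; subsets with $\det(U_\mc{S})=0$ contribute zero to both numerator and denominator and may be ignored.

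The key single-subset identity is classical: for any invertible $n\times n$ matrix $A$, the product $\det(A)\cdot\on{Tr}(A^{-1})$ equals $e_{n-1}$ of the eigenvalues of $A$, and when $A$ is symmetric this in turn equals the sum of its $(n-1)\times(n-1)$ principal minors. Applied to $A = U_\mc{S}^T U_\mc{S}$ with $|\mc{S}| = n$ and $\det(U_\mc{S})\neq 0$, this gives
\[\snorm{U_\mc{S}^{-1}}_{\on{HS}}^2 \cdot \det(U_\mc{S})^2 = \on{Tr}((U_\mc{S}^T U_\mc{S})^{-1})\cdot\det(U_\mc{S}^T U_\mc{S}) = \sum_{i\in\mc{S}} \det(U_{\mc{S}\setminus\{i\}}^T U_{\mc{S}\setminus\{i\}}),\]
because deleting the row and column indexed by $i$ from $U_\mc{S}^T U_\mc{S}$ yields precisely $U_{\mc{S}\setminus\{i\}}^T U_{\mc{S}\setminus\{i\}}$.

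Summing over all $n$-subsets $\mc{S}$ and reindexing the double sum $\sum_\mc{S}\sum_{i\in\mc{S}}$ by $\mc{T}:=\mc{S}\setminus\{i\}$, each $(n-1)$-subset $\mc{T}\subseteq[m]$ arises from exactly $m-n+1$ choices of $\mc{S}\supseteq\mc{T}$, whence
\[\sum_{|\mc{S}|=n}\snorm{U_\mc{S}^{-1}}_{\on{HS}}^2 \det(U_\mc{S})^2 = (m-n+1)\sum_{|\mc{T}|=n-1}\det(U_\mc{T}^T U_\mc{T}).\]
Two further applications of Cauchy-Binet simplify the right side: expanding $\det(U_\mc{T}^T U_\mc{T})$ as a sum over $(n-1)$-row-subsets $\mc{J}\subseteq[n]$, interchanging the order of summation, and applying Cauchy-Binet a second time to the row-restricted matrix yields $\sum_{|\mc{J}|=n-1}\det((UU^T)_{\mc{J},\mc{J}})$, which by the same $e_{n-1}$ identity (now for $UU^T$) collapses to $\det(UU^T)\cdot\on{Tr}((UU^T)^{-1})$. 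Dividing by $\sum_\mc{S}\det(U_\mc{S})^2=\det(UU^T)$ produces the advertised average. The proof is entirely standard multilinear algebra; the only delicate step is the reindexing of the double sum together with the cofactor bookkeeping, and I do not anticipate any real obstacle.
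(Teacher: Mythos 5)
The paper does not actually prove \cref{lem:restricted-invertibility}; it is imported as a black box from the cited reference [GO11, Theorem~1]. So there is no in-paper argument to compare against, and your proposal should be judged as a self-contained proof of the cited result. On that score it is correct and complete in outline: the volume-sampling average with weights $\det(U_\mc{S})^2$, the identity $\snorm{U_\mc{S}^{-1}}_{\on{HS}}^2\det(U_\mc{S})^2=\on{Tr}\bigl(\on{adj}(U_\mc{S}^TU_\mc{S})\bigr)=\sum_{i\in\mc{S}}\det(U_{\mc{S}\setminus\{i\}}^TU_{\mc{S}\setminus\{i\}})$, the $(m-n+1)$-fold reindexing over $(n-1)$-subsets, and the two applications of Cauchy--Binet collapsing $\sum_{|\mc{T}|=n-1}\det(U_\mc{T}^TU_\mc{T})$ to $\det(UU^T)\on{Tr}[(UU^T)^{-1}]$ are all valid, and the rank hypothesis guarantees the denominator $\det(UU^T)$ is positive, so the averaging argument does yield a nonsingular $\mc{S}$ with the claimed bound. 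This is the standard volume-sampling/column-subset-selection argument, and in spirit it matches how this inequality is usually derived in the literature the paper cites.

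One small imprecision: your displayed identity is stated as an equality, but if you literally discard the subsets with $\det(U_\mc{S})=0$ from the left-hand side (as you propose), the reindexed double sum $\sum_{\mc{S}}\sum_{i\in\mc{S}}\det(U_{\mc{S}\setminus\{i\}}^TU_{\mc{S}\setminus\{i\}})$ can still pick up strictly positive contributions from singular $\mc{S}$ (a singular $\mc{S}$ may contain nonsingular $(n-1)$-subsets; e.g.\ $n=1$, $m=2$, $U=(1\;\;0)$, $\mc{S}=\{2\}$). So after dropping singular subsets you only get ``$\le$'' rather than ``$=$'' (equality does hold if one interprets each term as $\on{Tr}(\on{adj}(U_\mc{S}^TU_\mc{S}))$ throughout). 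This does not damage the proof, since the inequality goes in the direction you need: the weighted average over nonsingular $\mc{S}$ is at most $(m-n+1)\on{Tr}[(UU^{T})^{-1}]$, which is all the probabilistic step requires. Also, the ``trace of adjugate equals sum of principal $(n-1)$-minors'' identity needs no symmetry assumption, so that hypothesis in your write-up is superfluous but harmless.
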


The following is the key lemma in the proof of \cref{prop:compressible}.

\begin{lemma}\label{lem:levy-estimate}
Fix $p\in (0,1/2]$ and $k\ge 1$. There exists $\theta = \theta_{\ref{lem:levy-estimate}}(p,k) > 0$ for which the following holds. If $M$ is a $k\times n$ matrix whose rows are orthonormal vectors and $x$ is an $n$-dimensional random vector with independent $\on{Ber}(p)$ components, then
\[\mc{L}(Mx, \theta)\le (1-p)^k.\]
\end{lemma}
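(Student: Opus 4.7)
The plan is to prove Lemma~\ref{lem:levy-estimate} by induction on $k$, dichotomizing on the column norms of $M$. Write $c_1,\dots,c_n \in \mb{R}^k$ for the columns of $M$, so that $\sum_i c_ic_i^T = MM^T = I_k$, and let $\theta = \theta_{\ref{lem:levy-estimate}}(p,k)>0$ be a constant (depending only on $p, k$) to be chosen. The dichotomy is: either some column satisfies $\|c_{i_0}\|_2 > 2\theta$ (Case A), or every column satisfies $\|c_i\|_2 \le 2\theta$ (Case B).

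In Case A, set $v = c_{i_0}/\|c_{i_0}\|_2 \in \mb{S}^{k-1}$ and let $Q \in \mb{R}^{(k-1)\times k}$ have orthonormal rows spanning $v^\perp$ (so $Qv = 0$ and $QQ^T = I_{k-1}$). Then $(QM)(QM)^T = QMM^T Q^T = QQ^T = I_{k-1}$, so $QM$ is a $(k-1)\times n$ matrix with orthonormal rows, and the inductive hypothesis (valid provided $\theta \le \theta_{\ref{lem:levy-estimate}}(p, k-1)$) gives $\mc{L}(QMx, \theta) \le (1-p)^{k-1}$. For any $z \in \mb{R}^k$, the orthogonal decomposition $\|Mx - z\|_2^2 = (v^T(Mx - z))^2 + \|Q(Mx - z)\|_2^2$ shows $\{\|Mx - z\|_2 \le \theta\} \subseteq \{|v^T Mx - v^T z| \le \theta\} \cap \{\|QMx - Qz\|_2 \le \theta\}$. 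The vector $QMx$ is independent of $x_{i_0}$ since $Qc_{i_0} = 0$, whereas $v^T Mx = \|c_{i_0}\|_2 x_{i_0} + \sum_{j \ne i_0}(v^T c_j)x_j$, so the scalar constraint on $v^T Mx$ forces $\sum_{j \ne i_0}(v^T c_j)x_j$ into disjoint intervals for the two values of $x_{i_0}$ (using $\|c_{i_0}\|_2 > 2\theta$). Letting $p_b$ denote the probability of the joint event conditioned on $x_{i_0} = b$, disjointness yields $p_0 + p_1 \le \mb{P}[\|QMx - Qz\|_2 \le \theta] \le (1-p)^{k-1}$, and therefore $\mb{P}[\|Mx - z\|_2 \le \theta] \le (1-p)p_0 + p\,p_1 \le (1-p)^k$ (maximized at $p_0 = (1-p)^{k-1}$, $p_1 = 0$, since $1-p \ge p$).

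In Case B, the first row $r_1 \in \mb{R}^n$ of $M$ is a unit vector with all entries $|r_{1,i}| \le \|c_i\|_2 \le 2\theta$. Apply \cref{lem:LKR} to $(Mx)_1 = \sum_i r_{1,i}x_i$ with $r_i := |r_{1,i}|/4$ and $r := \theta \ge \max_i r_i$: each summand $r_{1,i}x_i \in \{0, r_{1,i}\}$ has its two values separated by $|r_{1,i}| > 2r_i$, so $\mc{L}(r_{1,i}x_i, r_i) \le 1-p$ and $\sum_i(1-\mc{L}(r_{1,i}x_i, r_i))r_i^2 \ge (p/16)\|r_1\|_2^2 = p/16$. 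Thus \cref{lem:LKR} gives $\mc{L}((Mx)_1, \theta) \le 4C_{\ref{lem:LKR}}\theta/\sqrt{p}$, and since $\|Mx - z\|_2 \ge |(Mx)_1 - z_1|$ for all $z$, also $\mc{L}(Mx, \theta) \le 4C_{\ref{lem:LKR}}\theta/\sqrt{p}$.

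Taking $\theta := \min\bigl(\theta_{\ref{lem:levy-estimate}}(p, k-1),\ (1-p)^k\sqrt{p}/(4C_{\ref{lem:LKR}})\bigr)$ reconciles both cases; the base case $k=1$ runs by the same argument with the convention $(1-p)^0 := 1$ (there, Case A invokes only a vacuous inductive hypothesis). The (mild) main obstacle is the calibration of $\theta$: the dichotomy threshold $2\theta$ must simultaneously be large enough to separate the two values of $x_{i_0}$ in Case A, and small enough that the \cref{lem:LKR}-produced bound in Case B falls below $(1-p)^k$; the explicit choice above achieves both. Notably, restricted invertibility (\cref{lem:restricted-invertibility}) is not needed for this lemma.
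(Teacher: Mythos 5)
Your proof is correct, and it takes a genuinely different route from the paper's. The paper argues in one shot: it collects the columns carrying most of the row masses, splits on whether some row retains non-negligible mass outside this set (in which case \cref{lem:LKR} applied to that single row already beats $2^{-k}\le(1-p)^k$), and otherwise invokes restricted invertibility (\cref{lem:restricted-invertibility}) to extract a $k\times k$ submatrix $M_{\mc{S}}$ with $\sigma_k(M_{\mc{S}})$ bounded below, so that $x\mapsto M_{\mc{S}}x_{\mc{S}}$ is injective with separated images on $\{0,1\}^k$ and the concentration is at most the largest atom $(1-p)^k$. You instead induct on $k$, splitting on column norms: a large column lets you peel off one Bernoulli coordinate via the orthogonal decomposition along $v=c_{i_0}/\snorm{c_{i_0}}_2$ (the disjoint-intervals/optimization step $(1-p)p_0+pp_1\le(1-p)^k$ is the right way to combine the conditional probabilities, and it correctly uses $p\le 1/2$), while uniformly small columns mean a single row is spread out and \cref{lem:LKR} alone gives a bound that can be driven below $(1-p)^k$ by shrinking $\theta$. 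Both arguments are complete; yours is more elementary (no restricted invertibility) and in fact yields a larger threshold, $\theta=\Theta\bigl((1-p)^k\sqrt{p}\bigr)$ versus the paper's $\Theta\bigl(\sqrt{p}/(k5^{k})\bigr)$, though still exponentially small in $k$ and hence not resolving the paper's conjecture that $\on{poly}(p,1/k)$ suffices. The only cosmetic issues are that \cref{lem:LKR} requires $r_i>0$, so indices with $r_{1,i}=0$ should be discarded (they contribute nothing anyway), and it is worth saying explicitly that $\mc{L}$ is monotone in the radius so that $\theta\le\theta_{\ref{lem:levy-estimate}}(p,k-1)$ lets you invoke the inductive hypothesis at radius $\theta$.
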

\begin{remark}
Our proof shows that $\theta(p,k)$ can be taken to be of size $\on{poly}(p)\exp(-\Omega(k))$. We suspect this is far from the truth and conjecture that one can take $\theta(p,k)$ to be of size $\on{poly}(p,1/k)$.
\end{remark}
\begin{proof}
For each $1\le i\le k$, let $\mc{T}_i\subseteq[n]$ be the indices $j \in [n]$ corresponding to the $\lfloor C_{\ref{lem:LKR}}^2 25^k/p\rfloor$ (which is $\ge k$) largest values $|M_{ij}|$, where (for concreteness) we break ties according to the natural ordering of the integers. Let $\mc{T} = \cup_{i=1}^k\mc{T}_i$. Since $\sum_{j=1}^{n}|M_{ij}|^{2} = 1$ for every $i \in [k]$, it follows that for every $j \in \mc{T}^{c}$,
\[|M_{ij}| \le 5^{-k}\sqrt{p}/C_{\ref{lem:LKR}} \quad \forall  i \in [k].\]
We have two cases.

\vspace{2mm}

\textbf{Case I: }There exists some $i \in [k]$ such that $\sum_{j \in \mc{T}^{c}}|M_{ij}|^{2} \ge 4^{-k}$. In this case, by applying \cref{lem:LKR} with $r = 5^{-k}\sqrt{p}/(3C_{\ref{lem:LKR}})$ and $r_j = |M_{ij}|/3$, we find that
\[\mc{L}\bigg(\sum_{j\in\mc{T}^c}M_{ij}x_j,r\bigg)\le C_{\ref{lem:LKR}}\frac{C_{\ref{lem:LKR}}^{-1}5^{-k}\sqrt{p}/3}{2^{-k}\sqrt{p/9}} < 2^{-k}\le (1-p)^k.\]
Since 
\[\mc{L}(Mx,r)\le\mc{L}((Mx)_i,r)\le\mc{L}\bigg(\sum_{j\in\mc{T}^c}M_{ij}x_j,r\bigg),\]
we have the required conclusion with $\theta = r =  5^{-k}\sqrt{p}/(3C_{\ref{lem:LKR}})$.

\vspace{2mm} 

\textbf{Case II: }For every $i \in [k]$, $\sum_{j \in \mc{T}^{c}}|M_{ij}|^{2} \le 4^{-k}$. Let $U = M_{\mc{T}}$ denote the $k \times |\mc{T}|$ matrix formed by the columns of $M$ corresponding to $\mc{T}$. Note that $k\le |\mc{T}| \le C_{\ref{lem:LKR}}^2 k25^k/p$. Let $C_1,\dots, C_{k}$ denote the columns of $M$. Since $MM^{T} = I$, we have
\begin{align*}
\snorm{I-UU^T} &= \snorm{MM^T-UU^T}\le\sum_{j\in\mc{T}^c}\snorm{C_jC_j^T}\\
&= \sum_{j\in\mc{T}^c}\snorm{C_j}_2^2 = \sum_{i \in [k]}\sum_{j \in \mc{T}^{c}}|M_{ij}|^{2} \\
&\le k4^{-k} < 1/2.
\end{align*}
In particular, the eigenvalues of $UU^T$ are in $(1/2,1]$ so that
\[\on{Tr}[(UU^T)^{-1}] < 2k.\]
Therefore, by \cref{lem:restricted-invertibility}, there exists a set of columns $\mc{S} \subseteq \mc{T}$ of size $|\mc{S}| = k$ such that
\[\snorm{M_{\mc{S}}^{-1}}_{\on{HS}}^2 = \snorm{U_{\mc{S}}^{-1}}_{\on{HS}}^2\le 2C_{\ref{lem:LKR}}^2 k^2 25^k/p.\]
Hence, the smallest singular value of $M_{\mc{S}}$, denoted by $\sigma_k(M_{\mc{S}})$, must satisfy
\[\sigma_k(M_{\mc{S}})\ge\frac{\sqrt{p}}{2C_{\ref{lem:LKR}}k5^k}.\]

We claim that for 
\[r = \frac{\sqrt{p}}{5C_{\ref{lem:LKR}}k5^k},\]
we have that
\[\mc{L}(Mx, r) \le (1-p)^{k}.\]
Since 
\[\mc{L}(Mx, r) \le \mc{L}(M_{\mc{S}}x_{\mc{S}}, r),\]
it suffices to show that
\[\mc{L}(M_{\mc{S}}x_{\mc{S}}, r)\le (1-p)^{k}.\]
Since $x_{\mc{S}}$ is a $k$-dimensional vector with i.i.d.~$\on{Ber}(p)$ entries, it follows that $x_{\mc{S}}$ is supported on $\{0,1\}^k$ with maximum atom probability $(1-p)^k$. Moreover, by definition of the smallest singular value, we see that for any $x \neq y \in \{0,1\}^{k}$,
\[\|M_{\mc{S}}x - M_{\mc{S}}y\|_{2} \ge \sigma_{k}(M_{\mc{S}})\|x-y\|_{2} \ge \sigma_k(M_{\mc{S}}),\]
which shows that
\[\mc{L}(M_{\mc{S}}x_{\mc{S}},r) \le \max_{x \in \{0,1\}^{k}}\mb{P}[x_{\mc{S}} = x] =  (1-p)^k.\]
Thus, in either case, we can take
\[\theta = \frac{\sqrt{p}}{5C_{\ref{lem:LKR}}k5^k}.\qedhere\]
\end{proof}

The previous lemma allows us to quickly deduce the following which, in the special case $k=1$, is the usual `invertibility with respect to a single vector' (cf.~\cite[Lemma~3.5]{Tik20}).

\begin{lemma}\label{lem:fixed-vectors}
Fix $p\in(0,1/2]$, $k\ge 1$, and $\epsilon > 0$. Let $\xi = \on{Ber}(p)$. There exists $c_{\ref{lem:fixed-vectors}} = c_{\ref{lem:fixed-vectors}}(p,k, \epsilon) > 0$ for which the following holds. Let $n \ge n_{\ref{lem:fixed-vectors}}(p,k,\epsilon)$ and let $V = [v_1,\ldots,v_k]$ be an $n\times k$ matrix with orthonormal columns. Then,
\[\mb{P}[\snorm{M_{(n-k)\times n}(\xi)V}_{\on{HS}}\le c_{\ref{lem:fixed-vectors}}\sqrt{n}]\le (1-p+\epsilon)^{kn}.\]
\end{lemma}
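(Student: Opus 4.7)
The plan is to decompose $\snorm{M_{(n-k)\times n}(\xi)V}_{\on{HS}}^2$ row-by-row and apply \cref{lem:levy-estimate} to control the anticoncentration of each row. Writing $M = M_{(n-k)\times n}(\xi)$ with rows $R_1,\ldots,R_{n-k}$, one has
\[\snorm{MV}_{\on{HS}}^2 = \sum_{i=1}^{n-k}\snorm{R_iV}_2^2 = \sum_{i=1}^{n-k}\snorm{V^T R_i^T}_2^2.\]
Because $V$ has orthonormal columns, $V^T$ is a $k \times n$ matrix with orthonormal rows, so \cref{lem:levy-estimate} applies to each summand: with $\theta := \theta_{\ref{lem:levy-estimate}}(p,k)>0$ one obtains $\mb{P}[\snorm{V^T R_i^T}_2\le\theta]\le\mc{L}(V^T R_i^T,\theta)\le(1-p)^k$ for every $i$.

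The next step is to convert this pointwise anticoncentration into a binomial tail bound. I would introduce the independent Bernoulli indicators $X_i := \mbm{1}[\snorm{R_iV}_2\le\theta]$, each satisfying $\mb{P}[X_i=1]\le q:=(1-p)^k$. On the event $\{\snorm{MV}_{\on{HS}}\le c\sqrt{n}\}$, the inequality $\theta^2\sum_{i=1}^{n-k}(1-X_i)\le c^2n$ forces $\sum_i X_i\ge (n-k)-s$, where $s:=\lceil c^2n/\theta^2\rceil$. A union bound over subsets of size $(n-k)-s$ then yields
\[\mb{P}\bigl[\snorm{MV}_{\on{HS}}\le c\sqrt{n}\bigr]\le\binom{n-k}{s}(1-p)^{k((n-k)-s)}.\]

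To finish, I would choose $c=c(p,k,\epsilon)$ so that this bound is at most $(1-p+\epsilon)^{kn}$ for large $n$. Using $\binom{n-k}{s}\le (en/s)^s$ together with $s\approx c^2n/\theta^2$, the log of the ratio of the above bound to $(1-p+\epsilon)^{kn}$ is, to leading order in $n$,
\[n\left[\frac{c^2}{\theta^2}\log\frac{e\theta^2}{c^2}+\frac{kc^2}{\theta^2}\log\frac{1}{1-p}-k\log\frac{1-p+\epsilon}{1-p}\right],\]
and the bracketed quantity tends to $-k\log\frac{1-p+\epsilon}{1-p}<0$ as $c\to 0$, so any sufficiently small $c=c(p,k,\epsilon)$ works. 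The only substantive obstacle is this final balancing act: the combinatorial entropy $\binom{n-k}{s}$ of selecting which rows produce a short projection onto the span of $V$ must be absorbed by the gain $\bigl(\tfrac{1-p+\epsilon}{1-p}\bigr)^{kn}$, which is achieved easily by shrinking $c$ once $\theta(p,k)$ has been fixed by \cref{lem:levy-estimate}.
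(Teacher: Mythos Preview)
Your proof is correct and follows essentially the same route as the paper's: decompose $\snorm{MV}_{\on{HS}}^2$ row by row, apply \cref{lem:levy-estimate} to each row, observe that a small Hilbert--Schmidt norm forces all but a small fraction of rows to satisfy $\snorm{R_iV}_2\le\theta$, union-bound over the choice of the exceptional rows, and finally shrink the constant to absorb the binomial entropy into the $\epsilon$ slack. The paper parametrizes by an auxiliary $\epsilon'$ (setting $c=\theta\sqrt{\epsilon'}/2$) rather than your $c$ directly, but the arguments are otherwise identical.
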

\begin{proof}
Denote the rows of $M_{(n-k)\times n}(\xi)$ by $R_1,\ldots,R_{n-k}$. Let $\theta = \theta_{\ref{lem:levy-estimate}}(p,k)$. Note that if \[\snorm{M_{(n-k)\times n}(\xi)V}_{\on{HS}}\le\theta\sqrt{\epsilon'(n-k)},\] then at most $\epsilon'(n-k)$ rows $R_i$ can satisfy $\snorm{R_iV}_2 > \theta$. Denote the set of these rows by $\mc{I}$. By \cref{lem:levy-estimate} and the independence of the rows, we have for all $I\subseteq [n]$, $|I|\le \epsilon'(n-k)$ that
\begin{align*}
    \mb{P}[\mc{I} = I]
    \le \mb{P}[\snorm{R_i V}_{2} \le \theta \hspace{2mm} \forall i \in I^{c}] \le \prod_{i \in I^{c}}\mb{P}[\snorm{V^T(R_i)^T}_{2} \le \theta] \le (1-p)^{k\cdot (1-\epsilon')(n-k)}.
\end{align*}
Therefore, by the union bound over the choice of $I$, we have
\[\mb{P}[\snorm{M_{(n-k)\times n}(\xi)V}_{\on{HS}}\le \theta\sqrt{\epsilon'(n-k)}]\le n \cdot \binom{n-k}{\epsilon'(n-k)}(1-p)^{k\cdot (1-\epsilon')(n-k)}.\]
Therefore taking $c_{\ref{lem:fixed-vectors}} = \theta\sqrt{\epsilon'}/2$ for sufficiently small $\epsilon' = \epsilon'(p, k, \epsilon)$ and taking $n$ sufficiently large gives the desired conclusion.
\end{proof}

Combining this with a standard epsilon-net argument allows us to prove \cref{prop:compressible}.

\begin{proof}[Proof of \cref{prop:compressible}]
Let $\mc{E}(\delta, \rho)$ denote the event that for every $x \in \mb{S}^{n-1}$ such that $$(M_{(n-k)\times n}(\xi))x = 0,$$ we have $x \in \on{Comp}(\delta, \rho)$. By the union bound,
\[\mb{P}[\mc{E}_C(\delta, \rho)] \le \binom{n}{k}\mb{P}[\mc{E}(\delta, \rho)],\]
so that (after rescaling $\epsilon$) it suffices to show that
\[\mb{P}[\mc{E}(\delta, \rho)] \le (1-p+\epsilon)^{kn}.\]

Since the right-kernel of $M_{(n-k)\times n}(\xi)$ has dimension at least $k$, it follows that on the event $\mc{E}(\delta, \rho)$, we can find $k$ orthonormal vectors, $v_1,\dots, v_k$, such that $v_i \in \on{Comp}(\delta, \rho)$ and $(M_{(n-k)\times n}(\xi))v_i = 0$. Let $V$ denote the $n\times k$ matrix with columns $v_1,\dots, v_k$. Then,
\begin{equation}
\label{eqn:certificate}
M_{(n-k)\times n}(\xi)V = 0.
\end{equation}

Let $H \subseteq \mb{R}^{n}$ denote the subspace of vectors $x  = (x_1,\dots, x_n)$ such that $x_1 + \dots + x_n = 0$.
Let $\mc{E}_K$ be the event that the operator norm of $(M_{(n-k)\times n}(\xi))|_{H}$ (i.e.,~the linear operator from $H$ to $\mb{R}^{n-k}$ which coincides with $M_{(n-k)\times n}(\xi)$ on its domain) is at most $K\sqrt{n}$. Since $\xi$ is sub-Gaussian, it follows from standard estimates (cf.~\cite[Lemma~3.4]{Tik20}) that 
\[\mb{P}[\mc{E}_K]\ge 1 - \exp(-c_\xi K^2n).\]
In particular, by choosing $K$ to be of order $\sqrt{k}$, we can ensure that this probability is at least $1-4^{-kn}$. Then, by the union bound, it suffices to show that
\[\mb{P}[\mc{E}(\delta, \rho) \cap \mc{E}_K] \le (1-p+\epsilon)^{kn}.\]

We will show this by combining \cref{lem:fixed-vectors} with a standard epsilon-net argument. Let $\epsilon' > 0$ be a sufficiently small parameter to be chosen later. A standard volumetric net, `densified' in the all-ones direction (to account for the fact that $M_{(n-k)\times n}(\xi)((1,\dots,1)^{T}/\sqrt{n}) = \Theta(n)$) shows (cf.~\cite[Proposition~3.6]{Tik20}) that there exists a (deterministic) net $\mc{N}$ of $\on{Comp}(\delta, \rho)$, of size at most $(C/\epsilon')^{\delta n}$ (where $C$ is allowed to depend on $K$), such that for any $x \in \on{Comp}(\delta, \rho)$, there exists $y \in \mc{N}$ such that on the event $\mc{E}_K$,
\[\|M_{(n-k)\times n}(\xi)(x-y)\|_{2} \le (\epsilon' + \rho)K\sqrt{n}.\]

Let $\mc{V}$ denote the set of all $n\times k$ matrices whose columns are orthonormal vectors in $\on{Comp}(\delta, \rho)$. Then, by considering the $k$-fold product of $\mc{N}$, we obtain a net of $\mc{V}$. Using the standard trick of replacing points in this net by the closest point in $\mc{V}$ (see e.g.~\cite[Lemma~4.2]{Rud14}), we can obtain a (deterministic) net $\mc{M} \subseteq \mc{V}$ of size $|\mc{M}| \le (C/\epsilon')^{\delta kn}$ such that for every $V \in \mc{V}$, there exists $V' \in \mc{M}$ such that, on the event $\mc{E}_K$,
\[\|M_{(n-k)\times n}(\xi)(V-V')\|_{\on{HS}} \le 2(\epsilon'+\rho)K\sqrt{kn}.\]

Therefore, by \cref{eqn:certificate} and the union bound, we have 
\begin{align*}
    \mb{P}[\mc{E}(\delta, \rho) \cap \mc{E}_K] 
    & \le \mb{P}[\mc{E}_K \cap \{\exists V \in \mc{V}: M_{(n-k)\times n}(\xi) V = 0\}]\\
    &\le \sum_{V' \in \mc{M}}\mb{P}[\|M_{(n-k)\times n}(\xi) V'\|_{\on{HS}} \le 2(\epsilon'+\rho)K\sqrt{kn}]\\
    &\le \left(\frac{C}{\epsilon'}\right)^{\delta kn}\cdot (1-p+\epsilon/2)^{kn}\\
    &\le (1-p+\epsilon)^{kn},
\end{align*}
where the penultimate line follows from \cref{lem:fixed-vectors} by taking $\rho = \epsilon'$, $n$ sufficiently large, and $\epsilon'$ sufficiently small depending on $k, p, \epsilon$, and the last line follows by taking $n$ sufficiently large and $\delta$ sufficiently small depending on $\epsilon', \epsilon$. \qedhere


\end{proof}

\subsection{Incompressible vectors}\label{sub:incompressible}
Recall the notion of the threshold of a vector (\cref{def:threshold}). The following is the structure theorem/dichotomy used in the proof of \cref{thm:main}. The case $k = 1$ is implicit in \cite{Tik20}, although the statement given here is closer to the one in work of the authors \cite[Proposition~3.7]{JSS20discrete2}.


\begin{proposition}[Modification of {\cite[Proposition~3.7]{JSS20discrete2}}]
\label{prop:structure}
Let $\delta, \rho, \epsilon \in (0,1)$ and $k\ge 1$. There exist $L_{\ref{prop:structure}} = L_{\ref{prop:structure}}(k,\delta, \rho, p, \epsilon)$ and $n_{\ref{prop:structure}} = n_{\ref{prop:structure}}(k,\delta, \rho, p, \epsilon)$ such that for all $n\ge n_{\ref{prop:structure}}$, with probability at least $1-4^{-kn}$, exactly one of the following holds.
\begin{itemize}
    \item Every unit vector $v$ in the right-kernel of $M_{(n-k)\times n}(\xi)$ is in $\on{Comp}(\delta,\rho)$, or
    \item there is a unit vector $v$ in the right-kernel of $M_{(n-k)\times n}(\xi)$ with $\mc{T}_p(v, L_{\ref{prop:structure}}) \le (1-p+\epsilon)^n$.
\end{itemize}
\end{proposition}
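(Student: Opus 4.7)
\textbf{Proof proposal for \cref{prop:structure}.} The plan is to follow the template of \cite[Proposition~3.7]{JSS20discrete2} (the case $k=1$), adapted to exploit the full $k$-dimensionality of the right-kernel of $M := M_{(n-k)\times n}(\xi)$. Let $\mc{B}$ denote the failure event, namely that there exists an incompressible unit vector $v \in \ker M$ with $\mc{T}_p(v, L) > (1-p+\epsilon)^n$; we must show $\mb{P}[\mc{B}] \le 4^{-kn}$. On $\mc{B}$, since $\dim \ker M \ge k$, any such witness $v_1$ can be extended to an orthonormal frame $V = [v_1, v_2, \ldots, v_k]$ lying entirely in $\ker M$, and it suffices to bound the probability that such a frame exists. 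Promoting the single-vector argument to the frame level is what produces the extra factor of $k$ in the exponent: the condition $MV = 0$ amounts to $k(n-k)$ scalar constraints rather than merely $n-k$.

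Next, partition the collection of candidate frames by dyadic levels of the threshold of the distinguished first column, $\mc{T}_p(v_1, L) \in [2^{-j-1}, 2^{-j})$ for $j = 0, 1, \ldots, J$ with $J = \Theta(n\log(1/(1-p+\epsilon)))$. For each level $j$, construct a deterministic net of frames as follows. First, net $v_1$ on the incompressible sphere via the Hal\'asz/Tikhomirov-style inverse Littlewood--Offord argument of \cite{Tik20, JSS20discrete2}, whose output is a set of size shrinking with $j$ (since large threshold forces lattice structure of low covolume in the coordinates of $v_1$). Then net the remaining columns $v_2, \ldots, v_k$, which lie on the Stiefel manifold of $v_1^{\perp}$ of real dimension $O((k-1)n)$, to granularity $\rho'$ via a standard volumetric argument, contributing a factor of $(C/\rho')^{(k-1)n}$ to the net cardinality.

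For each net frame $V'$, the key small-ball estimate combines the threshold hypothesis on $v_1$ (which controls $\mc{L}(\sum b_i (v_1)_i, r)$ for $r$ above the level scale) with \cref{lem:levy-estimate} (which supplies the $(1-p)^k$ factor from $k$-dimensional Bernoulli anticoncentration against the orthonormal frame $V'^T$). The resulting row-wise bound $\mc{L}(V'^T R_i, \rho'\sqrt{k}) \lesssim_k (L\rho')^k$, tensorized across the $n-k$ independent rows, produces $\mb{P}[\snorm{MV'}_{\on{HS}} \lesssim \rho'\sqrt{k(n-k)}] \le (C_k L\rho')^{k(n-k)}$. A union bound over the frame net, summation over the $O(n)$ levels $j$, and the choice $\rho' \sim (1-p+\epsilon)^n$ deliver the total bound $4^{-kn}$ provided $L = L_{\ref{prop:structure}}$ is taken sufficiently large in terms of $k, p, \epsilon, \delta, \rho$.

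The main obstacle is verifying that the level-by-level inverse Littlewood--Offord net argument for $v_1$ composes cleanly with the orthonormality constraint coupling $v_1$ to $v_2, \ldots, v_k$, and that tensorization at scale $\rho'\sqrt{k}$ remains valid all the way down to $\rho' \sim (1-p+\epsilon)^n$ without incurring losses that overwhelm the $k(n-k)$-power saving. Fortunately, following \cite[Proposition~3.7]{JSS20discrete2}, the Hal\'asz-type structure theorem for $v_1$ is insensitive to what the other columns are, so the two nets decouple; and the essential new ingredient beyond \cite{JSS20discrete2} is \cref{lem:levy-estimate}, whose $(1-p)^k$ anticoncentration per row is precisely what powers the extra factor of $k$ appearing in the $4^{-kn}$ exponent.
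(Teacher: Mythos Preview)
Your proposal overcomplicates the argument and contains a genuine gap. The paper's proof is essentially the $k=1$ argument of \cite{Tik20,JSS20discrete2} applied verbatim to an $(n-k)\times n$ matrix: one searches for a \emph{single} incompressible kernel vector $v$ with threshold above $(1-p+\epsilon)^n$, does the usual dyadic decomposition and randomized rounding on that one vector, and tensorizes over the $n-k$ rows. Since $k$ is fixed, the target failure probability $4^{-kn}=(4^{-k})^n$ is just $c^n$ for a constant $c$ depending on $k$; it is obtained simply by taking $L_{\ref{prop:structure}}$ large enough in terms of $k$, exactly as the $k=1$ argument already allows for any prescribed exponential rate. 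No frame structure, and no use of \cref{lem:levy-estimate}, enters here; that lemma is the new ingredient for \cref{prop:compressible}, not for \cref{prop:structure}.

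The specific gap in your approach is the claimed row-wise bound $\mc{L}(V'^T R_i,\rho'\sqrt{k})\lesssim_k (L\rho')^k$. Only the distinguished column $v_1$ carries a threshold hypothesis, so at scale $\rho'$ you get at best $\mc{L}(\langle v_1,R_i\rangle,\rho')\le L\rho'$; the remaining columns $v_2,\dots,v_k$ have no anticoncentration information at scale $\rho'$ whatsoever. \cref{lem:levy-estimate} gives only the constant bound $(1-p)^k$ at the \emph{fixed} scale $\theta=\theta(p,k)$, not at the exponentially small scale $\rho'\sim(1-p+\epsilon)^n$, so it cannot supply the missing factor $(L\rho')^{k-1}$. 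Without that factor, the volumetric net of size $(C/\rho')^{(k-1)n}$ for the Stiefel part is uncompensated and the union bound diverges. The fix is not to repair the frame argument but to drop it: work with a single vector as the paper does, and let the choice of $L$ absorb the dependence on $k$.
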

\begin{remark}
The proof of \cref{prop:structure} follows the usual format of taking a dyadic decomposition of possible values for the threshold function, performing randomized rounding on potential kernel vectors at the correct scale, and then tensorizing the resulting small ball probabilities. The difference in the statement above compared to the versions in \cite{Tik20, JSS20discrete1, JSS20discrete2} is that we are missing $k$ rows as opposed to $1$ row. Additionally, we are considering the independent threshold model rather than the ``multislice'' models considered in \cite{JSS20discrete2}, which actually simplifies the proof. Further, since $k$ is sufficiently small compared to $n$, the difference in row counts has essentially no effect on the union bound computation.
\end{remark}

\section*{Acknowledgments}
We thank Jordan Ellenberg for a Twitter question which motivated the present work. We also thank Han Huang for sharing a preliminary version of \cite{Hua21} with us. 

\bibliographystyle{amsplain0.bst}
\bibliography{main.bib}

\end{document}